\newtheorem{theorem}{Theorem}[section]
\newtheorem{lemma}[theorem]{Lemma}
\title{Ultraspherical moments on a set of disjoint intervals }
\author{
Hashem Alsabi \\
Universit\'e des Sciences et Technologies, Lille 1, France\\
hashem.alsabi@gmail.com\\
James Griffin\\
Department of Mathematics\\
American University of Sharjah, UAE \\ 
jgriffin@aus.edu}
\begin{document}
\maketitle

\begin{abstract}
Moment evaluations are important for the study of non-classical orthogonal polynomial systems for which explicit representations are not known. In this paper we compute, in terms of the hypergeometric function, the moments associated with a generalized ultraspherical weight on a collection of intervals with two symmetric gaps. These moments, parametrized by the endpoints of the gaps, are identified as a one parameter deformation between the full range ultraspherical moments and the half range ultraspherical moments. 
\end{abstract}

\noindent
{\bf Keywords :} Ultraspherical Moments; Pell Equations; Hypergeometric Function

\vskip 0.2cm
\noindent
{\bf AMS subject classification : 33E30 33D45   }

\noindent
\section{Introduction}
\noindent
The full range Ultraspherical moments are defined by the following integral over the interval $[-1,1]$
\begin{equation} \label{intclassic}
\mu_n := \int_{-1}^1 x^n(1-x^2)^{\mu-1/2} \; dx \qquad n = 0,1,2,3,.. \qquad Re(\mu) > -1/2.
\end{equation}
The moments have the following representation in terms of the Gamma function
\begin{equation} \label{intclassiceval}
\begin{split}
\mu_{2n} &= \frac{\Gamma(n+1/2)\Gamma(\mu+1/2)}{\Gamma(n+\mu+1)} \\
\mu_{2n+1} &= 0 
\end{split}
\end{equation}
The half range Ultraspherical moments are defined over the interval $[0,1]$ as follows
\begin{equation} \label{halfeval}
\mu^H_{n}:= \int_0^1 x^n(1-x^2)^{\mu-1/2} \; dx \qquad n = 0,1,2,3,.. \qquad Re(\mu) > -1/2
\end{equation}
and have the following representation in terms of the Gamma function
\begin{equation} \label{halfeval2}
\mu^H_{n} = \frac{1}{2} \frac{\Gamma(n/2+1/2)\Gamma(\mu+1/2)}{\Gamma(n/2+\mu+1)}
\end{equation}
In this paper we construct a generalization of the classical ultraspherical moment integral (\ref{intclassic}) in terms of an integral over a set of disjoint intervals with two gaps of the same length. The particular configuration of disjoint intervals that we consider can be parametrized by one-parameter and we will show that the moments defined in this paper form a one-parameter deformation between the classical, full range, ultraspherical moments on the interval $[-1,1]$, and the half-range ultrapsherical moments on the interval $[0,1]$.
\vskip 0.2cm
\noindent
The construction we use follows an analagous construction to that used in \cite{Griffin-09} and \cite{Griffin-11} where one gap was considered, and is founded on the relationship between a collection of disjoint intervals and a diophantine polynomial equation known as a generalized Pell equation. The generalized ultraspherical moments form a one parameter generalization of the classical case and we show that the moments can be explicitly evaluated in terms of the hypergoemteric function and this parameter. 
\vskip 0.2cm
\noindent
Explicit evaluation of moments such as these is important in general for further study of the associated orthogonal  polynomials and related quantities, where one does not have an explicit representation for the polynomials or the recurrence coefficients in terms of special functions. See for example, \cite{Huybrechs} and \cite{Orel-Perne} where they make use of the half-range Chebyshev moments in a numerical study applied to the efficient computation of Fourier approximations. See also \cite{MRR} where computations performed with half range Legendre moments are applied to radiative transfer in spherical geometry. 
\vskip 0.1cm
\noindent
This paper is organized as follows. In section $2$ we give a brief summary of the construction of the 1-gap case, studied in \cite{Griffin-09} and \cite{Griffin-11}, together with the explicit representation of the moments. Before moving to the two gap case we evaluate the odd moments again using a different method which is based on the polynomial mapping technique first outlined in \cite{GVA}. This method results in a different representation which involves the classical hypergeometric function. We finish section $2$ by describing in detail the construction of the two gap case. In section $3$ we compute the moment integral in the two gap case explicitly in terms of the classical hypergeometric function, making use of the revised method in section $2$. Finally we relate the limiting case, as the gaps become as large as possible, to the half range ultraspherical moments on the interval $[0,1]$.
\noindent
\section{Construction of the moment integral}
\noindent
\subsection{Revisiting the one gap case}
In this section we briefly review the construction of the moment integral for the case of one gap and use this opportunity to express the odd moments from \cite{Griffin-11} in a more convenient form for the purpose of this paper. 
\vskip 0.2cm
\noindent
The Chebyshev polynomials of the first and second kind, which are orthogonal on the set $[-1,1]$, are well known to have parametric representations in terms of trigonometric functions. These representations derive mainly from the Polynomial Pell equation which is
\begin{equation} \label{Pell}  
P_n^2(x) - (x^2-1)Q_n^2(x) = 1.
\end{equation}
If $P$ and $Q$ are polynomials then it can be shown that the only solutions to the above equation are the Chebyshev polynomials of the first and second kind, $T_n$ and $U_n$ respectively. See \cite{Nathanson} and associated references for more details on the connection between the Pell equation and the Chebyshev polynomials. Akhiezer's treatise on elliptic functions, \cite{Akhiezer}, see also \cite{Akhiezer-2}, introduced a generalization of the Chebyshev polynomials that were associated with the following modified Pell equation
\begin{equation}
(x-\alpha)P_n^2(x) - (x^2-1)(x-\beta)Q_n^2(x) = (x-\gamma_n).
\end{equation}
Akhiezer showed that the only polynomial solutions of this equation are polynomials orthogonal on the interval with one gap :
\begin{equation}
E := [-1,\alpha] \cup [\beta,1] \qquad  -1 < \alpha \leq \beta < 1
\end{equation}
with respect to the weight function
\begin{equation}
w_E(x) = \sqrt{\frac{x-\alpha}{(1-x^2)(x-\beta)}}.
\end{equation}
Such polynomials have an analogous parametric representation in terms of elliptic functions. In \cite{ChLa} the technique of Akhiezer was generalized in order to obtain an explicit representation for generalized polynomials orthogonal on an interval with any number of gaps in terms of abelian functions.
\vskip 0.1cm
\noindent
In \cite{GVA}, a theory of orthogonal polynomials on several intervals with a general weight was developed. Under this theory the gaps could not be arbitrarily chosen, instead they had to be chosen in such a way that the several intervals were each a preimage of $[-1,1]$ under a polynomial mapping of some degree. For example, the set 
\begin{equation}
E_2 = [-1,-b] \cup [b,1] \qquad b \in [0,1)
\end{equation}
can be mapped onto the set $[-1,1]$ using a polynomial of degree $2$. So, generalized Chebyshev polynomials on the set $E_2$ can be studied from the point of view of Akhiezer or from the point of view of the theory of polynomial mappings. In \cite{Griffin-09}, and later in \cite{Griffin-11}, a further connection between the Pell equation and the set $E_2$ was explored. Specifically, the Pell equation on $E_2$ 
\begin{equation} \label{PellE2}
(x+b)P_n^2(x) - (x^2-1)(x-b)Q_n^2(x) = (x- (-1)^{n+1}b),  
\end{equation} 
was normalized to obtain
\begin{equation}
\frac{x+b}{(x- (-1)^{n+1}b)}P_n^2(x) - \frac{(x^2-1)(x-b)}{(x- (-1)^{n+1}b)}Q_n^2(x) = 1.
\end{equation}
Under this normalization the following quantity
\begin{equation}
\sqrt{\frac{x+b}{(x- (-1)^{n+1}b)}}P_n(x) \qquad  x \in E_2
\end{equation}
is a function that is somewhat analogous to the Chebyshev polynomial of degree $n$ in the sense that it tends to $T_n(x)$ as $b \to 0$. In particular for $n=1$ we have
\begin{equation}
\sqrt{\frac{x+b}{x-b}}P_1(x) = \sqrt{\frac{x^2-b^2}{1-b^2}}\frac{x}{|x|} \qquad x \in E_2
\end{equation}
In \cite{Griffin-11} this function was denoted as follows
\begin{equation}
\cos \phi_{E_2} := \sqrt{\frac{x^2-b^2}{1-b^2}}\frac{x}{|x|} \qquad x \in E_2
\end{equation}
to draw the analogy with the classical Chebyshev case where
\begin{equation}
\cos \theta = x \qquad x \in [-1,1].
\end{equation}
In \cite{Griffin-09} and \cite{Griffin-11} the following moment integrals over the set $E_2$ were computed where $n=0,1,2,...$ :
\begin{equation} \label{1-gap-even}
\int_{E_2} (\cos \phi_{E_2})^{2n}(1-\cos^2 \phi_{E_2})^\mu \; w_{E_2}(x) \; dx = \frac{\Gamma(\frac{1}{2})\Gamma(\mu + \frac{1}{2})}{\Gamma(\mu+1)}\frac{\left(\frac{1}{2} \right)_n}{(\mu+ 1)_n} \qquad Re(\mu) > -\frac{1}{2}
\end{equation}
\begin{equation} 
\begin{split} \label{1-gap-odd}
& \int_{E_2} (\cos \phi_{E_2})^{2n+1}(1-\cos^2 \phi_{E_2})^\mu w_{E_2}(x) \; dx \\
& \qquad \qquad \qquad   =\frac{\Gamma(\frac{1}{2})}{\Gamma(n+1+\mu)} \sum_{k=1}^{\infty}(-1)^{k+1}\frac{\Gamma(n+1-\frac{k}{2})\Gamma(\mu+\frac{k}{2})}{\Gamma(1-\frac{k}{2})\Gamma(\frac{1}{2}+\frac{k}{2})}b^k  \qquad Re(\mu) > -\frac{1}{2}
\end{split}
\end{equation}
In (\ref{1-gap-even}) we are using the standard notation
\begin{equation} \label{shiftfac}
(a)_n := a(a+1)...(a+n-1) \qquad (a)_0 := 1
\end{equation}
and in (\ref{1-gap-odd}) the term
\begin{equation*}
\frac{\Gamma(n+1-\frac{k}{2})}{\Gamma(1-\frac{k}{2})}
\end{equation*}
is to be understood as a polynomial and is therefore well defined for even values of $k$. 
Note that as $b \to 0$ the equations (\ref{1-gap-even}) and (\ref{1-gap-odd}) revert to 
\begin{equation}
\int_{-1}^1x^{2n}(1-x^2)^{\mu-1/2}\; dx = \frac{\Gamma(\frac{1}{2})\Gamma(\mu + \frac{1}{2})}{\Gamma(\mu+1)}\frac{\left(\frac{1}{2} \right)_n}{(\mu+ 1)_n} \qquad Re(\mu) > -\frac{1}{2}
\end{equation}
and 
\begin{equation}
\int_{-1}^1x^{2n+1}(1-x^2)^{\mu-1/2}\; dx = 0  \qquad Re(\mu) > -\frac{1}{2}
\end{equation}
which are the classical ultraspherical moments, this time with the $n$-dependance written in terms of the shifted factorial (\ref{shiftfac}). 
\vskip 0.2cm
\noindent
Before proceeding to the two gap case we now compute the odd moments in (\ref{1-gap-odd}) using the method of polynomial mappings to arrive at a representation of the odd moments in terms of the hypergeometric function. We will make use of the following facts that can be easily verified :
\begin{itemize}
\item  For $n=2$, up to a change in sign, the solutions of (\ref{PellE2}) are 
\begin{equation} \label{P2Q2}
P_2(x) = \frac{2}{1-b^2}x^2 - \frac{1+b^2}{1-b^2} \qquad \textrm{and} \qquad Q_2(x) = \frac{2}{1-b^2}x + \frac{2b}{1-b^2}
\end{equation}
\item
\begin{equation}
\{ x \in {\bf R} \; | \; P_2(x) \in [-1,1] \} = E_2.
\end{equation}
\item  
\begin{equation}
\cos^2{\phi_{E_2}} = \frac{P_2+1}{2} \qquad \textrm{specifically} \qquad 
 \cos \phi_{E_2} = \left\{
\begin{array}{cc}
-\sqrt{\frac{P_2+1}{2}} & x \in [-1,-b] \\
\sqrt{\frac{P_2+1}{2}} & x \in [b,1]
\end{array}
\right. 
\end{equation}
\item
\begin{equation}
w_{E_2}(x) = \frac{|Q_2(x)|}{\sqrt{1-P_2^2(x)}}
\end{equation}
\end{itemize}
Making use of the points $3$ and $4$ above, the left hand side of (\ref{1-gap-odd}) and be written as
\begin{equation}
\begin{split}
& -\int_{-1}^{-b} \left(\frac{P_2(x)+1}{2} \right)^{n+1/2} \left(\frac{1-P_2(x)}{2} \right)^\mu  \frac{|Q_2(x)|}{\sqrt{1-P_2^2(x)}}  dx \\
& \qquad + \int_{b}^{1} \left(\frac{P_2(x)+1}{2} \right)^{n+1/2} \left(\frac{1-P_2(x)}{2} \right)^\mu  \frac{|Q_2(x)|}{\sqrt{1-P_2^2(x)}}  dx
\end{split}
\end{equation}
Making the substitution $ z = P_2(x) $ in the above integrals we obtain
\begin{equation}
\int_{-1}^1\left(\frac{z+1}{2} \right)^{n+\frac{1}{2}}\left(\frac{1-z}{2} \right)^{\mu}\left[-\frac{Q_2(P_{1,2}^{-1}(z))}{P_2^{\prime}(P_{1,2}^{-1}(z))}+\frac{Q_2(P_{2,2}^{-1}(z))}{P_2^{\prime}(P_{2,2}^{-1}(z))} \right] \frac{dz}{\sqrt{1-z^2}}
\end{equation}
Here the function $P_{1,2}^{-1}$ is defined to be the inverse of the function $P_2$ where $P_2$ is restricted to the domain $[-1,-b]$. Similarly, the function  $P_{2,2}^{-1}$ is defined to be the inverse of the function $P_2$ where $P_2$ is restricted to the domain $[b,1]$. To simplify the terms in the square brackets we form the following partial fractions decomposition
\begin{equation}
\frac{Q_2(s)}{P_2(s) - z} =  \frac{w_1(z)}{s - P_{1,2}^{-1}(z)} +  \frac{w_2(z)}{s - P_{2,2}^{-1}(z)}  \qquad z \in (-1,1) 
\end{equation}
It follows that  
\begin{equation}
w_i(z) =  \frac{Q_2(P_{i,2}^{-1}(z))}{P_2^{\prime}(P_{i,2}^{-1}(z))} \qquad i=1,2 \qquad  \textrm{and} \qquad w_1(z) + w_2(z) = 1.
\end{equation}
Therefore we rewrite the integral as 
\begin{equation} \label{int1}
\int_{-1}^1\left(\frac{z+1}{2} \right)^{n+\frac{1}{2}}\left(\frac{1-z}{2} \right)^{\mu}\left[1-2\frac{Q_2(P_{1,2}^{-1}(z))}{P_2^{\prime}(P_{1,2}^{-1}(z))} \right] \frac{dz}{\sqrt{1-z^2}}
\end{equation}
We compute the following quantity directly from (\ref{P2Q2})
\begin{equation}
P_{1,2}^{-1}(x) = - \frac{1}{\sqrt{2}} \sqrt{(1+b^2)+x(1-b^2)}
\end{equation}
from which we find that 
\begin{equation}
1-2\frac{Q_2(P_{1,2}^{-1}(z))}{P_2^{\prime}(P_{1,2}^{-1}(z))} = \frac{b\sqrt{2}}{\sqrt{1+b^2}}\frac{1}{\sqrt{1-\frac{b^2-1}{b^2+1}z}}
\end{equation}
Therefore (\ref{int1}) becomes
\begin{equation}
\frac{b\sqrt{2}}{\sqrt{1+b^2}}\frac{1}{2^{n+\frac{1}{2}+\mu}} \int_{-1}^1 (z+1)^{n}(1-z)^{\mu-\frac{1}{2}}\left(1 - \frac{b^2-1}{b^2+1}z \right)^{-\frac{1}{2}} dz
\end{equation}
which through the substitution $ u = \frac{1+z}{2} $ can be written as
\begin{equation}
\int_0^1 u^{n}(1-u)^{\mu-\frac{1}{2}}\left(1-\frac{(b^2-1)}{b^2}u\right)^{-\frac{1}{2}} du = \frac{\Gamma(n+1)\Gamma(\mu+\frac{1}{2})}{\Gamma(\mu+n+\frac{3}{2})}\;_2F_1\left(\frac{1}{2},n+1;\mu+n+\frac{3}{2}; \frac{b^2-1}{b^2}\right)
\end{equation}
Finally, using Pfaff's transformation  \cite{AAR} page 68,  on the hypergeometric function above we obtain
\begin{equation} \label{1-gap-odd-rep2}
\begin{split}
& \int_{E_2} (\cos \phi_{E_2})^{2n+1}(1-\cos^2 \phi_{E_2})^\mu w_{E_2}(x) \; dx \\
& \qquad \qquad \qquad = b \frac{\Gamma(n+1)\Gamma(\mu+\frac{1}{2})}{\Gamma(\mu+n+\frac{3}{2})}\;_2F_1\left(\frac{1}{2},\mu+\frac{1}{2};\mu+n+\frac{3}{2}; 1-b^2\right)
\end{split}
\end{equation}
as an alternative expression for the odd moments. Figure $1$ shows an example of the equivalence of the representations (\ref{1-gap-odd}) and (\ref{1-gap-odd-rep2}) in the case where $\mu=0$. In the graph, curve $A$ shows an example of the representation (\ref{1-gap-odd}) for the first odd moment with $\mu =0$, and curve $B$ shows the same example but using the representation (\ref{1-gap-odd-rep2}). We can see that the curves coincide for $b \in [0,1)$. 
\begin{figure}[h] 
\begin{center}
\center{\includegraphics[width=10cm,height=9cm,angle=0]{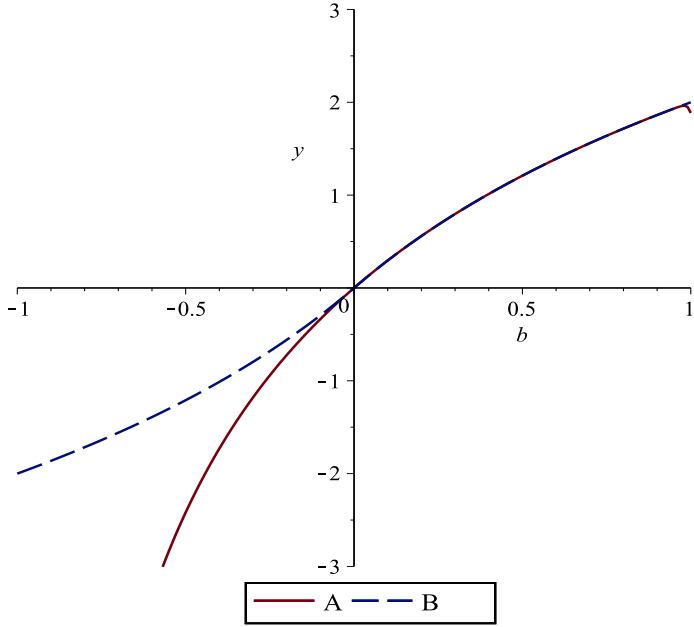}}
\end{center}
\caption{Plot of the first odd moment as a function of $b$ in the case where $\mu = 0$.}
\label{3fns}
\end{figure}

\subsection{Two gap case}
For the remainder of this paper, we carry out a similar study to that described in the previous subsection, this time working with a set with two symmetric gaps. Specifically we work on the following set 
\begin{equation}
E_4 := [-1,-b] \cup [-\sqrt{1-b^2},\sqrt{1-b^2}] \cup [b,1] \qquad b \in \left[\frac{1}{\sqrt{2}}, 1\right)
\end{equation}
This particular set is of interest as it can be thought of as the pre-image of $[-1,1]$ under the following polynomial mapping of degree $4$ (see figure $2$) :
\begin{equation} \label{map}
 P_4(x) = \frac{2x^4-2x^2+b^2(1-b^2)}{b^2(1-b^2)}
\end{equation}
\begin{figure}[h]
\begin{center}
\center{\includegraphics[width=8cm,height=6cm,angle=0]{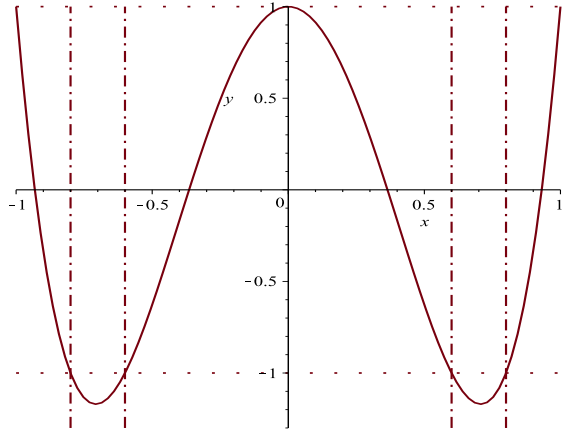}}
\end{center}
\caption{Graph of $P_4$ with $b=0.8$. The gaps in the set $E_4$ lie inside both pairs of vertical lines.}
\end{figure}
\vskip 0.2cm
\noindent
The following lemma confirms this fact.
\begin{lemma} \label{preimage}
Let $ b \in \left[\frac{1}{\sqrt{2}}, 1\right)$. The function $P_4$ defined in equation (\ref{map}) satisfies the following properties:
\begin{enumerate}
\item $P_4(\pm 1) = P_4(0) = 1$
\item $P_4(\pm b) = P_4(\pm \sqrt{1-b^2}) = -1$
\item $P_4$ is decreasing on the intervals $(-1,-b)$, $(0, \sqrt{1-b^2})$
\item $P_4$ is increasing on the intervals $[\sqrt{1-b^2}, 0]$, $[b,1]$ 
\end{enumerate} 
\end{lemma}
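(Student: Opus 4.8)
The plan is to dispatch items 1 and 2 by direct substitution into (\ref{map}), and items 3 and 4 by a sign analysis of $P_4'$. Since the numerator of $P_4$ depends on $x$ only through $x^2$, the function $P_4$ is even; it therefore suffices to prove each claim on $[0,1]$ and transfer the rest by symmetry. Concretely, $P_4(-1)=P_4(1)$ together with $P_4(-b)=P_4(b)$ and $P_4(-\sqrt{1-b^2})=P_4(\sqrt{1-b^2})$ take care of the negative arguments in items 1 and 2; and since $P_4'$ is odd, monotonicity on $(b,1)$ is equivalent to (reversed) monotonicity on $(-1,-b)$, while monotonicity on $(0,\sqrt{1-b^2})$ is equivalent to (reversed) monotonicity on $(-\sqrt{1-b^2},0)$. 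Thus items 3 and 4 each reduce to a single statement on a subinterval of $[0,1]$.

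For item 1 one computes $P_4(0)=b^2(1-b^2)/\big(b^2(1-b^2)\big)=1$ and $P_4(1)=\big(2-2+b^2(1-b^2)\big)/\big(b^2(1-b^2)\big)=1$. For item 2, at $x=b$ the numerator of $P_4$ equals $2b^4-2b^2+b^2-b^4=b^4-b^2=-b^2(1-b^2)$, while at $x=\sqrt{1-b^2}$ it equals $2(1-b^2)^2-2(1-b^2)+b^2(1-b^2)=(1-b^2)\big(2(1-b^2)-2+b^2\big)=-b^2(1-b^2)$; dividing by $b^2(1-b^2)>0$ gives $-1$ in both cases.

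For items 3 and 4 I would compute $P_4'(x)=\dfrac{8x^3-4x}{b^2(1-b^2)}=\dfrac{4x(2x^2-1)}{b^2(1-b^2)}$. Since $b\in[1/\sqrt2,1)$ the denominator is positive, so on $[0,1]$ the sign of $P_4'(x)$ equals the sign of $x(2x^2-1)$, which is negative for $0<x<1/\sqrt2$ and positive for $1/\sqrt2<x<1$. The one genuine input is the ordering $0\le\sqrt{1-b^2}\le 1/\sqrt2\le b<1$, valid precisely because $b^2\ge 1/2\ge 1-b^2$; this places $(0,\sqrt{1-b^2})$ inside $(0,1/\sqrt2)$ and $(b,1)$ inside $(1/\sqrt2,1)$, yielding items 3 and 4 on those two intervals, and the remaining two intervals then follow from the oddness of $P_4'$. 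All the computations are elementary; the only point that actually uses the hypothesis — and hence the only thing to watch — is the location of the critical point $1/\sqrt2$ relative to the endpoints $\sqrt{1-b^2}$ and $b$, which is exactly what the restriction $b\in[1/\sqrt2,1)$ secures (outside that range the monotonicity pattern changes and $E_4$ ceases to be the full preimage $\{x:P_4(x)\in[-1,1]\}$).
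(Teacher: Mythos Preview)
Your proof is correct and follows essentially the same approach as the paper: direct substitution for items 1 and 2, and for items 3 and 4 the observation that $P_4'$ vanishes only at $0,\pm 1/\sqrt{2}$, together with the fact that the hypothesis $b\in[1/\sqrt{2},1)$ keeps these critical points out of the interiors of the relevant subintervals. The only organizational difference is that you invoke the even/odd symmetry at the outset to reduce to $[0,1]$, whereas the paper establishes item 3 directly on both intervals and then derives item 4 from item 3 by symmetry; your explicit computation of $P_4'$ and the ordering $\sqrt{1-b^2}\le 1/\sqrt{2}\le b$ makes the role of the hypothesis slightly more transparent.
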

\begin{proof}
Items $1$ and $2$ follow directly from substitution of the points into the expression for $P_4$. The derivative of $P_4$,  denoted as $P_4^{\prime}$ from now on,  changes sign at $x = 0, \pm \frac{1}{\sqrt{2}} $. Since the intervals $(-1,-b)$ and $(0, \sqrt{1-b^2})$  do not contain any of these points for $b$ in the stated range it follows that $P_4$ is monotonic on these intervals. Combining this with $1$ and $2$ we find that $P_4$ is decreasing. $4$ follows from $3$ by symmetry. 
\end{proof}
\noindent
We now show how the polynmomial $P_4$ appears as part of a solution to a generalized Pell equation. Consider the normalized general set with two gaps
\[ [-1,\alpha_1] \cup [\beta_1, \alpha_2] \cup [\beta_2,1] \qquad -1 < \alpha_1 < \beta_1 < \alpha_2 < \beta_2 < 1 \]
The generalized Pell equation for this set is
\[ (x-\alpha_1)(x-\alpha_2)P_n^2(x) - (x^2-1)(x-\beta_1)(x-\beta_2)Q_n^2(x) = (x-\gamma_1(n))(x-\gamma_2(n) \]
where $P_n$ is a polynomial of degree $n$ and $Q_n$ is a polynomial of degree $n-1$.  As such, the Pell equation associated with the set $E_4$ is 
\begin{equation} \label{PE4}
(x-\sqrt{1-b^2})(x+b)P_n^2(x) -(x^2-1)(x-b)(x+\sqrt{1-b^2})Q_n^2(x) = (x-\gamma_1(n))(x-\gamma_2(n))
\end{equation}
For the formulation of our integral, the cases $n=2$ and $n=4$ will be of particular interest. The following two lemmas deal with these cases individually.
\begin{lemma} \label{PellP4}
The polynomial $P_4$ appearing in (\ref{map}) together with the polynomial
\[ Q_4(x) = \frac{2x}{b^2(b^2-1)}\left( x^2 + (b- \sqrt{1-b^2})x - b\sqrt{1-b^2} \right) \]
form a solution of (\ref{PE4}). 
\end{lemma}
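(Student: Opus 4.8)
The plan is to sidestep a direct expansion of the degree-ten identity (\ref{PE4}) and instead exploit the polynomial-mapping structure of $P_4$. The one computation that does the real work is the complete factorization of $P_4^2-1$. From (\ref{map}),
$$b^4(1-b^2)^2\left(P_4^2(x)-1\right) = \left(2x^4-2x^2\right)\left(2x^4-2x^2+2b^2(1-b^2)\right) = 4x^2(x^2-1)\left(x^4-x^2+b^2(1-b^2)\right),$$
and the quartic in the last bracket factors as $x^4-x^2+b^2(1-b^2) = (x^2-b^2)\left(x^2-(1-b^2)\right)$ (either by direct multiplication, or by checking that $\pm b$ and $\pm\sqrt{1-b^2}$ are roots). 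Hence
$$P_4^2(x)-1 = \frac{4x^2\,(x^2-1)\,(x-b)(x+b)\,(x-\sqrt{1-b^2})(x+\sqrt{1-b^2})}{b^4(1-b^2)^2}.$$

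With this in hand I would write $(x-\sqrt{1-b^2})(x+b)\,P_4^2(x) = (x-\sqrt{1-b^2})(x+b) + (x-\sqrt{1-b^2})(x+b)\left(P_4^2(x)-1\right)$ and substitute the factorization into the second term. Regrouping the six linear factors of $P_4^2-1$: two of them, $(x-\sqrt{1-b^2})$ and $(x+b)$, combine with the prefactor to produce the square $(x-\sqrt{1-b^2})^2(x+b)^2$, the remaining four leave exactly $(x^2-1)(x-b)(x+\sqrt{1-b^2})$, and what is left over is precisely $Q_4^2(x)$ — indeed $x^2+(b-\sqrt{1-b^2})x-b\sqrt{1-b^2} = (x-\sqrt{1-b^2})(x+b)$, so $Q_4(x)^2 = \dfrac{4x^2(x-\sqrt{1-b^2})^2(x+b)^2}{b^4(1-b^2)^2}$ because $\left(b^2(b^2-1)\right)^2 = b^4(1-b^2)^2$. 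Putting the pieces together yields
$$(x-\sqrt{1-b^2})(x+b)\,P_4^2(x) - (x^2-1)(x-b)(x+\sqrt{1-b^2})\,Q_4^2(x) = (x-\sqrt{1-b^2})(x+b),$$
which is precisely (\ref{PE4}) for $n=4$, with $\gamma_1(4) = \sqrt{1-b^2}$ and $\gamma_2(4) = -b$.

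I do not expect a genuine obstacle in this argument; the only non-mechanical point is recognizing the factorization $x^4 - x^2 + b^2(1-b^2) = (x^2-b^2)(x^2-(1-b^2))$, after which every linear factor appearing in (\ref{PE4}) is forced into place and the left-hand side, a priori of substantially higher degree, collapses to the stated quadratic. A brute-force alternative — expanding both sides as polynomials in $x$ and matching all coefficients — would also work but is far less transparent, and the factorization approach has the added advantage of being reusable for the companion $n=2$ computation treated in the next lemma.
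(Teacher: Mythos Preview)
Your argument is correct and is essentially the same as the paper's: both reduce the Pell identity to the factorization $P_4^2(x)-1=\dfrac{4x^2(x^2-1)(x^2-b^2)(x^2-(1-b^2))}{b^4(1-b^2)^2}$, which the paper leaves as ``easily verified'' after cancelling the common factor $(x+b)(x-\sqrt{1-b^2})$ from both sides, while you carry it out explicitly via $(P_4-1)(P_4+1)$. Your labeling of the $\gamma$'s is swapped relative to the paper's choice, but since the right-hand side of (\ref{PE4}) is symmetric in $\gamma_1,\gamma_2$ this is immaterial.
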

\begin{proof}
Let 
\begin{equation} \label{gamma4}
\gamma_1(4) = -b \qquad \gamma_2(4) = \sqrt{1-b^2}.
\end{equation}
Then we can write $Q_4$ as follows :
\[ Q_4(x) =  \frac{2x}{b^2(b^2-1)} (x-\gamma_1(4))(x-\gamma_2(4)). \]
As such, the Pell equation simplifies to 
\begin{equation}
P_4^2(x) - (x^2-1)(x^2-b^2)(x^2-(1-b^2))\left(\frac{2x}{b^2(b^2-1)}\right)^2 = 1
\end{equation}
Substituting (\ref{map}) into the above equation it can be easily verified that the left hand side cancels to $1$. 
\end{proof}
\noindent
\begin{lemma}
The polynomials
\[ P_2(x) = \frac{(x-b)(x+\sqrt{1-b^2})}{b\sqrt{1-b^2}} \qquad \textrm{and} \qquad Q_2(x) = \frac{x}{b\sqrt{1-b^2}} \]
form a solution of (\ref{PE4}). 
\end{lemma}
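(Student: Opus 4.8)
\noindent
The plan is to verify the claim by direct substitution, exactly in the spirit of the proof of Lemma~\ref{PellP4}; the only genuinely new task is to pin down the pair $(\gamma_1(2),\gamma_2(2))$ on the right-hand side of (\ref{PE4}). I would write $c=\sqrt{1-b^2}$ for brevity, so that $P_2(x)=\frac{(x-b)(x+c)}{bc}$ and $Q_2(x)=\frac{x}{bc}$, insert these into the left-hand side of (\ref{PE4}), and pull out the common factor $\frac{(x-b)(x+c)}{b^2c^2}$. Then the first term $(x-c)(x+b)P_2^2(x)$ contributes $(x-c)(x+b)(x-b)(x+c)=(x^2-b^2)(x^2-c^2)$ to the remaining bracket, while the second term $-(x^2-1)(x-b)(x+c)Q_2^2(x)$ contributes $-x^2(x^2-1)$.

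Everything then rests on the elementary identity
\begin{equation*}
(x^2-b^2)\bigl(x^2-(1-b^2)\bigr)-x^2(x^2-1)=b^2(1-b^2),
\end{equation*}
which one obtains immediately by expanding in $y=x^2$: the quartic terms cancel, the coefficient of $y$ is $-\bigl(b^2+(1-b^2)\bigr)+1=0$, and only the constant $b^2(1-b^2)$ survives. Since $b^2(1-b^2)=b^2c^2$ exactly cancels the denominator $b^2c^2$ of the prefactor, the left-hand side of (\ref{PE4}) collapses to $(x-b)(x+c)$.

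It therefore suffices to take
\[ \gamma_1(2)=b \qquad \gamma_2(2)=-\sqrt{1-b^2}, \]
since then $(x-\gamma_1(2))(x-\gamma_2(2))=(x-b)(x+\sqrt{1-b^2})$ matches the computation above; this is consistent with the required degrees $\deg P_2=2$, $\deg Q_2=1$, and with the pattern — already visible in (\ref{gamma4}) and in the one-gap equation (\ref{PellE2}) — that the $\gamma_i(n)$ sit at band endpoints of $E_4$ and swap according to the parity of $n$. I do not anticipate any real obstacle: the verification is routine, and the only thing to be careful about is the sign bookkeeping that distinguishes the factor $(x-\sqrt{1-b^2})(x+b)$ appearing in the Pell equation from the factor $(x-b)(x+\sqrt{1-b^2})$ produced on the right, and correspondingly the choice of $\gamma_1(2),\gamma_2(2)$ rather than their negatives.
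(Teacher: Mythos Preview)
Your proof is correct and follows essentially the same route as the paper: both insert $P_2$ and $Q_2$ into (\ref{PE4}), reduce to the elementary identity $(x^2-b^2)(x^2-(1-b^2))-x^2(x^2-1)=b^2(1-b^2)$, and read off the $\gamma$'s. The only cosmetic difference is that the paper labels them $\gamma_1(2)=-\sqrt{1-b^2}$, $\gamma_2(2)=b$ (the swap of your labels), which of course does not affect the product $(x-\gamma_1(2))(x-\gamma_2(2))$.
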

\begin{proof}
Let
\begin{equation}
\gamma_1(2) = -\sqrt{1-b^2} \qquad \gamma_2(2) = b.
\end{equation}
With these gamma points and $P_2$ and $Q_2$ as stated above, the Pell equation simplifies to 
\begin{equation}
\frac{(x^2-(1-b^2))(x^2-b^2)}{b^2(1-b^2)} - \frac{x^2(x^2-1)}{b^2(1-b^2)} = 1
\end{equation}
The left hand side can be readily seen to cancel to $1$. 
\end{proof}
\noindent
{\it Construction of the integral}
\vskip 0.2cm
\noindent
In analogy to the previous work on the set $E_2$ we define the quantity
\begin{equation}
\begin{split}
 \cos \phi_{E_4}(x) & := \sqrt{\frac{(x-\alpha_1)(x-\alpha_2)}{(x-\gamma_1(2))(x-\gamma_2(2))}}P_2(x) \\
 & = \sqrt{\frac{(x-\sqrt{1-b^2})(x+b)}{(x+\sqrt{1-b^2})(x-b)}}P_2(x)  \\
 &  =  \sqrt{\frac{(x-\sqrt{1-b^2})(x+b)}{(x+\sqrt{1-b^2})(x-b)}}\left(\frac{(x-b)(x+\sqrt{1-b^2})}{b\sqrt{1-b^2}}\right) \qquad x \in E_4
\end{split}
\end{equation}
\begin{figure}[t] 
\begin{center}
\center{\includegraphics[width=8cm,height=6cm,angle=0]{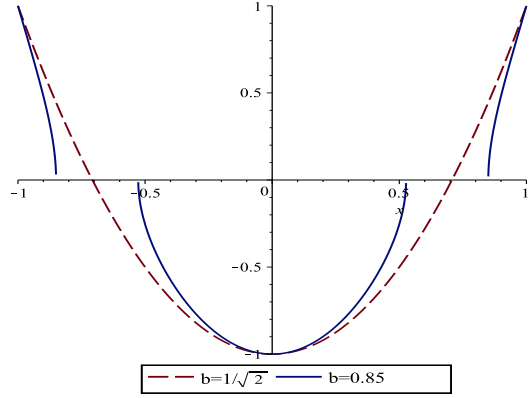}}
\end{center}
\caption{Plot of $\cos \phi_{E4}$ for two values of $b$}
\end{figure}
whose graph is shown in figure $3$. Furthermore we define the following weight function
\begin{equation}
w_{E_4}(x) = \sqrt{\frac{(x-\sqrt{1-b^2})(x+b)}{(1-x^2)(x-b)(x+\sqrt{1-b^2})}}
\end{equation}
which is the Akhiezer weight for the set $E_4$. 
In terms of these quantities, we construct the following moment integral 
\begin{equation} \label{def-int}
\int_{E_4} \left( \cos \phi_{E_4} \right)^{n}\left(1-\cos^2 \phi_{E_4} \right)^{\mu} w_{E_4} \; dx \qquad  n = 0, 1, 2,... \;\; \textrm{and} \;\;  Re(\mu) > -\frac{1}{2}
\end{equation}
where the dependence on $x$ has been suprressed for convenience of notation. Note that as $b \to \frac{1}{\sqrt{2}}$ the integral above becomes
\begin{equation}
\int_{-1}^1 \left( T_2(x)\right)^n \left(1-(T_2(x))^2 \right)^{\mu} \frac{dx}{\sqrt{1-x^2}}. 
\end{equation}
\vskip 0.2cm
\noindent
It is the purpose of this paper to study the integral (\ref{def-int}). In the next section we will show, using the theory of polynomial mappings \cite{GVA}, that the even moments are the classical ultraspherical moments and are independent of the parameter $b$. 
\vskip 0.2cm
\noindent
\begin{equation} \label{int-even}
\begin{split}
\int_{E_4} \left( \cos \phi_{E_4} \right)^{2n}\left(1-\cos^2 \phi_{E_4} \right)^{\mu} w_{E_4} \; dx = \frac{\Gamma(\frac{1}{2})\Gamma(\mu+\frac{1}{2})}{\Gamma(\mu+1)}\frac{\left(\frac{1}{2} \right)_n}{(\mu+1)_n} \\
\qquad \qquad \qquad  n = 0, 1, 2,... \;\; \textrm{and} \;\; Re(\mu) > -\frac{1}{2}
\end{split}
\end{equation}
Note that the right hand side of the above equation is equal to $\mu_{2n}$ given in (\ref{intclassiceval}). 
We will also show that the odd moments can be computed explicitly in terms of the classical hypergeometric function using the same method that was used in section $2.1$. Specifically we will show that 
\vskip 0.2cm
\noindent
\begin{equation} \label{int-odd}
\begin{split}
& \int_{E_4} \left( \cos \phi_{E_4} \right)^{2n+1}\left(1-\cos^2 \phi_{E_4} \right)^{\mu} w_{E_4} \; dx = \\
& \qquad \qquad (1-2b\sqrt{1-b^2})\frac{(1)_n}{(\mu+\frac{1}{2})_{n+1}}\;_2F_1\left(\frac{1}{2}, \mu+1; \mu + n + \frac{3}{2} ; 4b^2(1-b^2) \right) \\
& \qquad \qquad \qquad n = 0, 1, 2,... \;\; \textrm{and} \;\; Re(\mu) > -\frac{1}{2}
\end{split}
\end{equation}


\noindent
\section{Main Results}
\begin{theorem} \label{even-moments}
For $ n = 0, 1, 2,...$,  $Re(\mu) > -\frac{1}{2}$ and $ b \in \left( \frac{1}{\sqrt{2}},1 \right) $
\begin{equation}
\int_{E_4} \left( \cos \phi_{E_4} \right)^{2n}\left(1-\cos^2 \phi_{E_4} \right)^{\mu} w_{E_4} \; dx =  \frac{\Gamma(\frac{1}{2})\Gamma(\mu+\frac{1}{2})}{\Gamma(\mu+1)}\frac{\left(\frac{1}{2} \right)_n}{(\mu+1)_n}
\end{equation}
\end{theorem}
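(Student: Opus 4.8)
\noindent
The plan is to repeat the polynomial-mapping computation of Section~2.1, now with the degree-four map $P_4$ of Lemma~\ref{preimage} playing the role that $P_2$ played there. The first task is to rewrite the integrand in terms of $P_4$ and $Q_4$. Squaring the definition of $\cos\phi_{E_4}$ and simplifying using the explicit form of $P_2$ gives
\[ \cos^2\phi_{E_4}=\frac{\bigl(x^2-(1-b^2)\bigr)(x^2-b^2)}{b^2(1-b^2)}, \]
and expanding the numerator and comparing with (\ref{map}) produces the key identity $\cos^2\phi_{E_4}=\tfrac{1}{2}\bigl(P_4(x)+1\bigr)$, hence $1-\cos^2\phi_{E_4}=\tfrac{1}{2}\bigl(1-P_4(x)\bigr)$; this is the exact analogue of item~3 in Section~2.1. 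Similarly, from the simplified Pell identity for $n=4$ obtained in the proof of Lemma~\ref{PellP4} one has $1-P_4^2=-(x^2-1)(x^2-b^2)\bigl(x^2-(1-b^2)\bigr)\bigl(\tfrac{2x}{b^2(b^2-1)}\bigr)^2$, and dividing $Q_4^2$ by this and cancelling common factors yields $Q_4^2/(1-P_4^2)=w_{E_4}^2$, i.e. $w_{E_4}=|Q_4|/\sqrt{1-P_4^2}$ on $E_4$ --- the analogue of item~4.

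With these two substitutions, the left-hand side of the theorem becomes
\[ \int_{E_4}\Bigl(\tfrac{P_4(x)+1}{2}\Bigr)^{n}\Bigl(\tfrac{1-P_4(x)}{2}\Bigr)^{\mu}\frac{|Q_4(x)|}{\sqrt{1-P_4^2(x)}}\,dx. \]
By Lemma~\ref{preimage} the set $E_4$ is the disjoint union of the four intervals $[-1,-b]$, $[-\sqrt{1-b^2},0]$, $[0,\sqrt{1-b^2}]$ and $[b,1]$, on each of which $P_4$ is a monotone bijection onto $[-1,1]$; write $P_{1,4}^{-1},\dots,P_{4,4}^{-1}$ for the corresponding inverse branches. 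Substituting $z=P_4(x)$ on each interval and orienting so that $z$ runs from $-1$ to $1$ (which absorbs the sign of $P_4'$ on the two decreasing branches), each branch contributes $\bigl|Q_4/P_4'\bigr|\,(1-z^2)^{-1/2}\,dz$, so the integral equals
\[ \int_{-1}^{1}\Bigl(\tfrac{z+1}{2}\Bigr)^{n}\Bigl(\tfrac{1-z}{2}\Bigr)^{\mu}\left(\sum_{i=1}^{4}\left|\frac{Q_4\bigl(P_{i,4}^{-1}(z)\bigr)}{P_4'\bigl(P_{i,4}^{-1}(z)\bigr)}\right|\right)\frac{dz}{\sqrt{1-z^2}}. \]

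The crux of the proof --- and the step I expect to be the main obstacle --- is to show that the bracketed sum equals $1$ for every $z\in(-1,1)$. As in Section~2.1 I would form the partial-fraction decomposition of $Q_4(s)/(P_4(s)-z)$ over its four simple poles $s=P_{i,4}^{-1}(z)$; since $\deg Q_4=3<4=\deg P_4$, comparing the $1/s$ coefficients as $s\to\infty$ shows that $\sum_{i=1}^{4}Q_4(P_{i,4}^{-1}(z))/P_4'(P_{i,4}^{-1}(z))$ equals the ratio of the leading coefficients of $Q_4$ and $P_4$, which is $-1$. To upgrade $\bigl|\sum_i w_i(z)\bigr|=1$ to $\sum_i|w_i(z)|=1$ one checks that $Q_4(x)/P_4'(x)$ keeps a constant negative sign on each of the four intervals; this follows from the factorisation $Q_4(x)=\tfrac{2x}{b^2(b^2-1)}(x+b)(x-\sqrt{1-b^2})$, the monotonicity directions listed in Lemma~\ref{preimage}, and the inequality $\sqrt{1-b^2}<b$ valid for $b\in(\tfrac{1}{\sqrt{2}},1)$. (Equivalently, this is the weight pull-back statement of the polynomial-mapping theory of \cite{GVA}.)

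Once $\sum_i|w_i(z)|\equiv1$ is in hand, the integral collapses to the classical moment integral on $[-1,1]$: writing $\sqrt{1-z^2}=2\bigl(\tfrac{1-z}{2}\bigr)^{1/2}\bigl(\tfrac{1+z}{2}\bigr)^{1/2}$ and substituting $u=(1+z)/2$ turns it into the Beta integral $\int_0^1 u^{\,n-1/2}(1-u)^{\mu-1/2}\,du=\Gamma(n+\tfrac{1}{2})\Gamma(\mu+\tfrac{1}{2})/\Gamma(n+\mu+1)$, which, rewritten with shifted factorials, is exactly $\frac{\Gamma(1/2)\Gamma(\mu+1/2)}{\Gamma(\mu+1)}\frac{(1/2)_n}{(\mu+1)_n}$ (and coincides with $\mu_{2n}$ in (\ref{intclassiceval})). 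In particular the value is manifestly independent of $b$, as asserted.
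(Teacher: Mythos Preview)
Your proof is correct and follows essentially the same route as the paper: rewrite $\cos^2\phi_{E_4}$ and $w_{E_4}$ in terms of $P_4,Q_4$, substitute $z=P_4(x)$ on each of the four monotone branches, use the partial-fraction identity to evaluate the sum of $Q_4/P_4'$ over the branches, and finish with the Beta integral. The one difference is bookkeeping: the paper drops the absolute values after the substitution and asserts $\sum_i w_i(z)=1$ (claiming equal leading coefficients), whereas you keep $\sum_i|w_i(z)|$, correctly observe that with the explicit $Q_4$ of Lemma~\ref{PellP4} the ratio of leading coefficients is $-1$, and then verify each $w_i<0$ --- your version is the more careful of the two, but the underlying argument is identical.
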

\begin{proof}
The proof is based on a substitution involving the mapping function $P_4$ given in (\ref{map}). This substitution follows the general method of polynomial mappings outlined in \cite{GVA}. It can be easily verified that 
\begin{equation}\label{evenP4}
\cos^2 \phi_{E_4} = \frac{P_4(x)+1}{2}
\end{equation}
In terms of $P_4$ the integral becomes
\begin{equation}
\int_{E_4} \left(\frac{P_4(x)+1}{2} \right)^n \left(\frac{1-P_4(x)}{2} \right)^\mu w_{E_4}  dx
\end{equation}
We know from lemma {\bf \ref{PellP4}} that $P_4$ satisfies the Pell equation,
\begin{equation}
P_4^2(x) - \frac{(x^2-1)(x-b)(x+\sqrt{1-b^2})}{(x+b)(x-\sqrt{1-b^2})}Q_4^2(x) = 1.
\end{equation}
We can rearrange the above equation to obtain
\begin{equation}
w_{E_4}(x) = \frac{|Q_4(x)|}{\sqrt{1-P_4^2(x)}}
\end{equation}
Therefore the integral can be written as
\begin{equation}
\int_{E_4} \left(\frac{P_4(x)+1}{2} \right)^n \left(\frac{1-P_4(x)}{2} \right)^\mu  \frac{|Q_4(x)|}{\sqrt{1-P_4^2(x)}} dx
\end{equation}
From lemma {\bf \ref{preimage}} we know that the set $E_4$ is the pre-image of $[-1,1]$ under the mapping $P_4$. Therefore we can write
\begin{equation}
E_4 = \cup_{i=1}^4 P^{-1}_{i,4}([-1,1])
\end{equation}
where $P_{1,4}$ is the function $P_4$ restricted to $[-1,-b]$. Similarly $P_{2,4}$ is restricted to $[-\sqrt{1-b^2},0]$, $P_{3,4}$ is restricted to $[0,\sqrt{1-b^2}]$ and $P_{4,4}$ is restricted to $[b,1]$. Using this notation the integral can now be written as a sum of four separate integrals 
\begin{equation}
\sum_{i=1}^4 \int_{P_{i,4}^{-1}([-1,1])}  \left(\frac{P_4(x)+1}{2} \right)^n \left(\frac{1-P_4(x)}{2} \right)^\mu  \frac{|Q_4(x)|}{\sqrt{1-P_4^2(x)}} dx
\end{equation}
where for each integral the function $P_4$ is monotonic. In each of these intervals we make the substitution $z = P_4(x)$ to obtain
\begin{equation}
\sum_{i=1}^4 \int_{-1}^1  \left(\frac{z+1}{2} \right)^n \left(\frac{1-z}{2} \right)^\mu  \frac{Q_4(P_{i,4}^{-1}(z))}{P_4^{\prime}(P_{i,4}^{-1}(z))}\frac{dz}{\sqrt{1-z^2}}
\end{equation}
where the absolute value signs have been removed owing to the orientation of the set $[-1,1]$. We move the summation sign inside the integral to obtain
\begin{equation}
 \int_{-1}^1  \left(\frac{z+1}{2} \right)^n \left(\frac{1-z}{2} \right)^\mu \left( \sum_{i=1}^4 \frac{Q_4(P_{i,4}^{-1}(z))}{P_4^{\prime}(P_{i,4}^{-1}(z))}\right) \frac{dz}{\sqrt{1-z^2}}
\end{equation}
We now show that the sum appearing inside the integral is equal to $1$. We start with the partial fraction decomposition of the following rational function
\begin{equation}
\frac{Q_4(s)}{P_4(s) - z} = \sum_{i=1}^4 \frac{w_i(z)}{s - P_{i,4}^{-1}(z)} \qquad z \in (-1,1) 
\end{equation}
remembering that $Q_4$ from the Pell equation is a polynomial of degree $3$ and that for $ z \in (-1,1) $ the polynomial $P_4(s) -z$ has four distinct roots. From the partial fractions decomposition it follows that  
\begin{equation}
w_i(z) =  \frac{Q_4(P_{i,4}^{-1}(z))}{P_4^{\prime}(P_{i,4}^{-1}(z))}.
\end{equation}
Since the leading coefficients of $Q_4$ and $P_4$ are the same we must have
\begin{equation} \label{sum}
\sum_{i=1}^4 w_i(z) = 1.
\end{equation}
So the integral becomes
\begin{equation}
 \int_{-1}^1  \left(\frac{z+1}{2} \right)^n \left(\frac{1-z}{2} \right)^\mu \frac{dz}{\sqrt{1-z^2}}
\end{equation}
which can be recognized as a beta integral and easily evaluates to 
\begin{equation}
 \frac{\Gamma(\frac{1}{2})\Gamma(\mu+\frac{1}{2})}{\Gamma(\mu+1)}\frac{\left(\frac{1}{2} \right)_n}{(\mu+1)_n}
\end{equation}
\end{proof}
\noindent
\begin{theorem} \label{odd-moments}
For $ n = 0, 1, 2,...$,  $Re(\mu) > -\frac{1}{2}$ and $ b \in \left( \frac{1}{\sqrt{2}},1 \right) $
\begin{equation}
\begin{split}
& \int_{E_4} \left( \cos \phi_{E_4} \right)^{2n+1}\left(1-\cos^2 \phi_{E_4} \right)^{\mu} w_{E_4} \; dx =  \\
& \qquad \qquad (1-2b\sqrt{1-b^2})\frac{(1)_n}{(\mu+\frac{1}{2})_{n+1}}\;_2F_1\left(\frac{1}{2}, \mu+1; \mu + n + \frac{3}{2} ; 4b^2(1-b^2) \right)
\end{split}
\end{equation}
\end{theorem}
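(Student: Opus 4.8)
The plan is to run the polynomial-mapping substitution of Theorem~\ref{even-moments}, but to retain the sign of $\cos \phi_{E_4}$ that disappears when only even powers are present. Write $(\cos \phi_{E_4})^{2n+1} = \varepsilon(x)\bigl(\tfrac{P_4(x)+1}{2}\bigr)^{n+\frac12}$, where $\varepsilon(x) := \operatorname{sgn}\bigl(\cos \phi_{E_4}(x)\bigr)$; from the explicit formula for $\cos \phi_{E_4}$ one reads off $\varepsilon(x) = \operatorname{sgn} P_2(x)$, so that $\varepsilon \equiv +1$ on $[-1,-b]\cup[b,1]$ and $\varepsilon \equiv -1$ on $[-\sqrt{1-b^2},\sqrt{1-b^2}]$. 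Using again $\cos^2 \phi_{E_4} = (P_4+1)/2$ and $w_{E_4} = |Q_4|/\sqrt{1-P_4^2}$, the integral splits over the four monotonicity intervals of $P_4$ from Lemma~\ref{preimage}, and the substitution $z = P_4(x)$ on each of these turns the left-hand side into
\[
\int_{-1}^1 \Bigl(\tfrac{z+1}{2}\Bigr)^{n+\frac12}\Bigl(\tfrac{1-z}{2}\Bigr)^{\mu} S(z)\,\frac{dz}{\sqrt{1-z^2}}, \qquad S(z) := \sum_{i=1}^4 \varepsilon_i\,\frac{\bigl|Q_4(P_{i,4}^{-1}(z))\bigr|}{\bigl|P_4'(P_{i,4}^{-1}(z))\bigr|},
\]
where $\varepsilon_i$ is the (constant) value of $\varepsilon$ on the $i$-th interval, so that $(\varepsilon_1,\varepsilon_2,\varepsilon_3,\varepsilon_4) = (+,-,-,+)$.

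The crux is to evaluate $S(z)$ in closed form. In Theorem~\ref{even-moments} the analogous unsigned sum telescopes to a constant via partial fractions and a count of leading coefficients; here the alternating signs destroy that cancellation and the four residues must be handled individually. Since $P_4$ is even, for $z \in (-1,1)$ the four roots of $P_4(s) = z$ are $\pm s_3(z)$ and $\pm s_4(z)$, whose squares are the two roots of $2t^2 - 2t + b^2(1-b^2)(1-z) = 0$, namely $\tfrac12\bigl(1 \mp \sqrt{1-2b^2(1-b^2)(1-z)}\bigr)$. Writing each residue $Q_4(s)/P_4'(s)$ in factored form, pairing the root $s$ with $-s$, and inserting these expressions, a routine computation collapses $S(z)$ to the single algebraic function
\[
S(z) = \frac{1 - 2b\sqrt{1-b^2}}{\sqrt{\,1 - 2b^2(1-b^2)(1-z)\,}}\,,
\]
whose numerator is positive for $b \in \bigl(\tfrac1{\sqrt2},1\bigr)$. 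The delicate step, which I expect to be the main obstacle, is the sign bookkeeping: one must verify that the four quantities $Q_4(P_{i,4}^{-1}(z))/P_4'(P_{i,4}^{-1}(z))$ share a common sign on $E_4$ and then combine them against the pattern $(+,-,-,+)$.

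With $S(z)$ in hand, substituting it back and combining the factors of $1 \pm z$ with $(1-z^2)^{-1/2}$ gives
\[
\bigl(1-2b\sqrt{1-b^2}\bigr)\, 2^{-(n+\mu+\frac12)}\int_{-1}^1 (1+z)^{n}(1-z)^{\mu-\frac12}\bigl(1 - 2b^2(1-b^2)(1-z)\bigr)^{-\frac12}\,dz,
\]
and the change of variable $u = (1-z)/2$ brings this to
\[
\bigl(1-2b\sqrt{1-b^2}\bigr)\int_0^1 u^{\mu-\frac12}(1-u)^{n}\bigl(1 - 4b^2(1-b^2)u\bigr)^{-\frac12}\,du,
\]
which is Euler's integral representation of a Gauss hypergeometric function. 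Evaluating it and rewriting $\Gamma(\mu+\tfrac12)\Gamma(n+1)/\Gamma(\mu+n+\tfrac32)$ as $(1)_n/(\mu+\tfrac12)_{n+1}$ gives the asserted closed form, with a Pfaff/Euler transformation of the $_2F_1$ (exactly as used in the one-gap case at the end of Section~2.1) available to cast the parameters in the stated shape. As a final check, letting $b \to \tfrac1{\sqrt2}$ one has $1-2b\sqrt{1-b^2} \to 0$, so the odd moments vanish in that limit, consistent with the classical fact that the odd ultraspherical moments on $[-1,1]$ are zero.
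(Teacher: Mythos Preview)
Your approach is essentially the same as the paper's: both write $\cos\phi_{E_4}$ piecewise in terms of $P_4$ with the sign pattern $(+,-,-,+)$ on the four monotonicity intervals, substitute $z=P_4(x)$ on each, collapse the signed combination of the four residues $Q_4(P_{i,4}^{-1}(z))/P_4'(P_{i,4}^{-1}(z))$ to the single algebraic factor $(1-2b\sqrt{1-b^2})\big/\sqrt{1-2b^2(1-b^2)(1-z)}$, and then recognise an Euler integral for ${}_2F_1$. The only substantive difference is tactical: the paper uses the identity $\sum_i w_i = 1$ to rewrite the signed sum as $1-2(w_2+w_3)$ and computes $w_2+w_3$ from explicit formulas for $P_{2,4}^{-1},P_{3,4}^{-1}$, whereas you pair $s$ with $-s$ directly; and the paper substitutes $u=(z+1)/2$, obtaining a ${}_2F_1$ with argument $4b^2(b^2-1)/(2b^2-1)^2$ that then requires Pfaff, while your choice $u=(1-z)/2$ lands immediately on the argument $4b^2(1-b^2)$, so the Pfaff step you mention as ``available'' is in fact unnecessary on your route. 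Note also that your Euler integral yields the second parameter $\mu+\tfrac12$, exactly as the paper's own Pfaff step does; the $\mu+1$ printed in the statement is a slip, and no Pfaff/Euler transformation will convert one into the other with the remaining data fixed.
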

\begin{proof}
We first write the integral in terms of the polynomial $P_4(x)$ as we did in the previous proof. However, since the power of $ \cos \phi_{E_4}$ is now odd we cannot make use of (\ref{evenP4}). Instead we make use of the following :
\begin{equation}
 \cos \phi_{E_4} = \left\{
\begin{array}{cc}
\sqrt{\frac{P_4+1}{2}} & x \in [-1,-b] \cup [b,1] \\
-\sqrt{\frac{P_4+1}{2}} & x \in [-\sqrt{1-b^2}, \sqrt{1-b^2} ]
\end{array}
\right.
\end{equation}
It therefore follows that 
\begin{equation}
\begin{split}
& \int_{E_4} \left( \cos \phi_{E_4} \right)^{2n+1}\left(1-\cos^2 \phi_{E_4} \right)^{\mu} w_{E_4} \; dx =  \\
& \qquad \int_{-1}^{-b} \left(\frac{P_4(x)+1}{2} \right)^{n+\frac{1}{2}} \left(\frac{1-P_4(x)}{2} \right)^{\mu} w_{E_4} \; dx  \\
& \qquad -  \int_{-\sqrt{1-b^2}}^{0} \left(\frac{P_4(x)+1}{2} \right)^{n+\frac{1}{2}} \left(\frac{1-P_4(x)}{2} \right)^{\mu} w_{E_4} \; dx \\
& \qquad -  \int^{\sqrt{1-b^2}}_{0} \left(\frac{P_4(x)+1}{2} \right)^{n+\frac{1}{2}} \left(\frac{1-P_4(x)}{2} \right)^{\mu} w_{E_4} \; dx \\
& \qquad +  \int_{b}^{1} \left(\frac{P_4(x)+1}{2} \right)^{n+\frac{1}{2}} \left(\frac{1-P_4(x)}{2} \right)^{\mu} w_{E_4} \; dx  
\end{split}
\end{equation}
In each of the four integrals appearing on the right hand side we make the change of variables $z = P_4(x)$ to obtain
\begin{equation}
\int_{-1}^1\left(\frac{z+1}{2} \right)^{n+\frac{1}{2}}\left(\frac{1-z}{2} \right)^{\mu}\left[\frac{Q_4(P_{1,4}^{-1}(z))}{P_4^{\prime}(P_{1,4}^{-1}(z))}-\frac{Q_4(P_{2,4}^{-1}(z))}{P_4^{\prime}(P_{2,4}^{-1}(z))}-\frac{Q_4(P_{3,4}^{-1}(z))}{P_4^{\prime}(P_{3,4}^{-1}(z))}+\frac{Q_4(P_{4,4}^{-1}(z))}{P_4^{\prime}(P_{4,4}^{-1}(z))} \right] \frac{dz}{\sqrt{1-z^2}}
\end{equation}
which in light of (\ref{sum}) can be written as
\begin{equation} \label{int2}
\int_{-1}^1\left(\frac{z+1}{2} \right)^{n+\frac{1}{2}}\left(\frac{1-z}{2} \right)^{\mu}\left[1-2\left(\frac{Q_4(P_{2,4}^{-1}(z))}{P_4^{\prime}(P_{2,4}^{-1}(z))}+\frac{Q_4(P_{3,4}^{-1}(z))}{P_4^{\prime}(P_{3,4}^{-1}(z))}\right) \right] \frac{dz}{\sqrt{1-z^2}}
\end{equation}
We now compute the quantity
\begin{equation}
\frac{Q_4(P_{2,4}^{-1}(z))}{P_4^{\prime}(P_{2,4}^{-1}(z))}+\frac{Q_4(P_{3,4}^{-1}(z))}{P_4^{\prime}(P_{3,4}^{-1}(z))}
\end{equation}
explicitly.
From (\ref{map}) we can easily see that 
\begin{equation}
\sqrt{2}P_{2,4}^{-1}(z) = -\sqrt{1-\sqrt{1-2(1-z)b^2(1-b^2)}}
\end{equation}
and
\begin{equation}
\sqrt{2}P_{3,4}^{-1}(z) = \sqrt{1-\sqrt{1-2(1-z)b^2(1-b^2)}}
\end{equation}
Substituting these directly into the expressions for $Q_4$ and $P_4^{\prime}$ we obtain
\begin{equation}
\frac{Q_4(P_{2,4}^{-1}(z))}{P_4^{\prime}(P_{2,4}^{-1}(z))}+\frac{Q_4(P_{3,4}^{-1}(z))}{P_4^{\prime}(P_{3,4}^{-1}(z))} = \frac{1}{2}\left(1 - \frac{1-2b\sqrt{1-b^2}}{\sqrt{1-2b^2+2b^4+[2b^2-2b^4]z}} \right).
\end{equation}
Therefore (\ref{int2}) becomes
\begin{equation}
(1-2b\sqrt{1-b^2})\int_{-1}^1\left(\frac{z+1}{2} \right)^{n+\frac{1}{2}}\left(\frac{1-z}{2} \right)^{\mu}(1-2b^2+2b^4+[2b^2-2b^4]z)^{-1/2} \frac{dz}{\sqrt{1-z^2}}
\end{equation}
By means of the subsitution $ u = \frac{z+1}{2} $ we obtain
\begin{equation}
\frac{1-2b\sqrt{1-b^2}}{2b^2-1} \int_0^1 u^{n}(1-u)^{\mu-\frac{1}{2}}\left(1-\frac{4b^2(b^2-1)}{(2b^2-1)^2}u \right)^{-1/2}du
\end{equation}
which can be written as 
\begin{equation}
\frac{1-2b\sqrt{1-b^2}}{2b^2-1}\frac{(1)_n}{(\mu+\frac{1}{2})_{n+1}}\;_2F_1\left(\frac{1}{2}, n+1; \mu + n + \frac{3}{2} ; \frac{4b^2(b^2-1)}{(2b^2-1)^2} \right).
\end{equation}
Finally, by making use of Pfaff's transformation, \cite{AAR} page 68, the above expression can be written as 
\begin{equation} \label{final-odd}
(1-2b\sqrt{1-b^2})\frac{(1)_n}{(\mu+\frac{1}{2})_{n+1}}\;_2F_1\left(\frac{1}{2}, \mu+1; \mu + n + \frac{3}{2} ; 4b^2(1-b^2) \right)
\end{equation}
\end{proof}
\noindent
From the representation for the odd moments derived above we can easily identify two limit cases. Firstly, as $b \to \frac{1}{\sqrt{2}}^+$, which corresponds to the gaps closing, it is clear that the expression (\ref{final-odd}) approaches zero. Similarly we can obtain the limiting case as the gaps approach their maximum size. By letting $b$ approach $1$ in (\ref{final-odd}) we find
\begin{equation}
\lim _{b \to 1^-}  \int_{E_4} \left( \cos \phi_{E_4} \right)^{2n+1}\left(1-\cos^2 \phi_{E_4} \right)^{\mu} w_{E_4} \; dx = \frac{(1)_n}{(\mu+\frac{1}{2})_{n+1}}.
\end{equation}
The half range moments (\ref{halfeval2}) can be written in terms of the shifted factorial as
\begin{equation}
\begin{split}
\mu_{2n}^H &= \frac{1}{2} \frac{\Gamma(\frac{1}{2})\Gamma(\mu+\frac{1}{2})}{\Gamma(\mu+1)}\frac{\left(\frac{1}{2} \right)_n}{(\mu+1)_n} \\
\mu_{2n+1}^H &= \frac{1}{2} \frac{(1)_n}{(\mu+\frac{1}{2})_{n+1}}
\end{split}
\end{equation}
and therefore, as the gaps approach their maximum size the generalized moments tend towards the half range ultraspherical moments multiplied by a factor of $2$.

\bibliographystyle{plain}

\end{document}